\def\a{\alpha}
\def\b{\beta}
\def\G{\Gamma}
\def\l{\lambda}
\def\ep{\epsilon}
\def\m{\mu}
\def\o{\omega}
\def\ep{\varepsilon}
\def\f{\rightarrow}
\def\trrd{\triangleright}
\def\v{\vdash}
\def\ou{\vee}
\def\etrd{\wedge}
\def\<{\langle}
\def\>{\rangle}
\def\F{\displaystyle\frac}
\newtheorem{thm}{Theorem}
\newtheorem{lem}[thm]{Lemma}
\newtheorem{defn}[thm]{Definition}
 \newcommand{\N}{ \mathbb{N}}
\newcommand{\be}{=_{\beta \eta}}
\newcommand{\ar}{\hspace*{0.1mm}   \rightarrow }
\newcommand{\LA}{ $\lambda$ }
\newcommand{\NA}{ \overline{\mathbb{N}}}
\def\bt{\begin}
\def\et{\end}
\newcommand{\di}{\#}
\newcommand{\comment}[1]{}
\begin{document}

\title{Counting proofs in propositional logic}
\author{ Ren\'e DAVID\footnote{Lama, Universit\'e de
Savoie, Campus Scientifique. 73376 Le Bourget du lac. Email : rene.david@univ-savoie.fr} \ \ \ Marek
ZAIONC\footnote{ Theoretical Computer Science, Jagiellonian University, Lojasiewicza 6, 30-348
Krak\'{o}w, Poland. Email : zaionc@tcs.uj.edu.pl. Research described in this paper is supported by
Polish Ministry of Science and Higher Education grant  NN206 356236 }}
\date{\today}
\maketitle

\begin{abstract}
We give a procedure for counting the number of different proofs of
a formula in various sorts of propositional logic. This number is
either an integer (that may be 0 if the formula is not provable)
or infinite.
\end{abstract}

\section{Introduction}
The aim of the paper is to give a procedure for counting the
number of different normal proofs of a formula in propositional
logic. By the well known Curry Howard correspondence, this is
similar to counting the number of different normal closed terms of
some fixed type in an extension of the $\lambda \mu$ calculus.

We show that this number is the least fix-point of a system of
polynomial equations in some natural complete lattice and we give
an algorithm for finding such a least fix-point.

The similar problem of counting closed typed lambda terms was
studied (see \cite{BeY79}) but never published by Ben- Yelles.
Some description of the Ben-Yelles solution can be found in
Hindley's book \cite{Y79}. Similarly Hirokawa in \cite {H91}
proved that the complexity of the question whether a given simple
type (implicational formula) possess an infinite number of normal
terms (or infinite number of proofs) is polynomial space complete.
Recently similar research about counting $\l$-calculus objects for
program synthesis was done by Wells and Yakobowski in
\cite{WY2005}.

 \section{The logic  }
 \subsection{Formulae and proofs}

\begin{defn}
Let ${\cal A}$ be a set (possibly infinite) of atomic constants.
 The set ${\cal F}$ of formulae is defined by the following grammar

$${\cal F}  ::= \; {\cal A}\cup \{\bot\} \; \mid \;   {\cal F} \f
{\cal F} \; \mid \; {\cal F} \etrd {\cal F} \; \mid \; {\cal F}
\ou {\cal F} $$
\end{defn}

\noindent We assume that $\bot \not\in {\cal A}$ and, as usual,
$\neg F$ will be an abbreviation for $F \f \bot$.

\begin{defn}\label{pm}

The rules for proofs in classical logic are the following.

\begin{center}

  $\F{}{\G , A \v  A} \, ax$

\medskip

 $\F{\G, A \v  B} {\G \v
 A \f B} \, \f_i$ \hspace{0.5cm}
 $\F{\G_1 \v  A \f B \quad \G_2 \v  A} {\G_1,\G_2 \v  B }\, \f_e$

\medskip

$\F{\G_1 \v  A_1 \quad \G_2 \v  A_2} {\G_1 , \G_2 \v A_1 \etrd
A_2} \, \etrd_i$ \hspace{0.5cm} $\F{\G \v  A_1 \etrd A_2} {\G \v
A_i } \, \etrd_e$

\medskip

 $\F{\G \v
A_j} {\G \v  A_1 \ou A_2} \, \ou_i$ \hspace{0.5cm}$\F{\G \v  A_1
\ou A_2 \quad \G_1 ,  A_1 \v  C \quad \G_2 , A_2 \v
 C} {\G,\G_1,\G_2 \v  C} \, \ou_e$

\medskip

 $\F{\G ,  \neg A \v  \bot} {\G \v A} \bot_e$ \hspace{0.5cm}
 $\F{\G ,  \neg A \v A} {\G ,  \neg A\v
\bot} \bot_i$
\end{center}

\end{defn}

\subsection{Terms coding proofs}

It is well known that a proof, in intuitionistic implicational logic, can be coded by
a simply typed $\l$-term. The same thing can, in fact, be done for proofs, in
classical logic, of any kind of formulae. The extension from intuitionistic logic to
classical logic is the $\l\m$-calculus introduced by Parigot in \cite{Par92}. The
extension to formulae using all the usual connectors has been introduced by
 de Groote in \cite{DGro01}. The next definition is a  presentation of this calculus.

\begin{defn}\label{lm}
Let ${\cal V}$ and ${\cal W}$ be disjoint sets of variables. The
set of $\l\m^{\f\etrd\ou}$-terms is defined by the following
grammar

$${\cal T} ::= {\cal V} \ | \ \l {\cal V}. {\cal T} \ | \ ({\cal T} \; {\cal E}) \ | \ \<{\cal T} , {\cal T} \> \ | \ \o_1
{\cal T} \ | \ \o_2 {\cal T} \ | \ \mu {\cal W}. {\cal T} \ | \
({\cal W} \; {\cal T})$$

$${\cal E} ::=  {\cal T} \ | \  \pi_1 \ | \ \pi_2 \ | \ [{\cal V}.{\cal T} ,{\cal V}.{\cal T}]$$

\end{defn}

The next definition shows how the terms introduced in definition
\ref{lm} code the proofs.

\begin{defn}\label{tr}
The typing rules for the $\l\m^{\f\etrd\ou}$-terms are as follows
\begin{center}

$\F{}{\G , x : A \v x : A} \, ax$ \hspace{0.5cm} $\F{\G, x: A \v M
: B} {\G \v \l
x.M : A \f B} \, \f_i$\\[0.5cm]

\medskip

$\F{\G_1 \v M : A \f B \quad \G_2 \v N : A} {\G_1,\G_2 \v (M \; N)
: B }\, \f_e$

\medskip

$\F{\G , \a : \neg A \v M : A} {\G , \a : \neg A\v (\a \; M) :
\bot} \bot_i$\hspace{0.5cm} $\F{\G , \a : \neg A \v M : \bot} {\G
\v \mu \a.M : A} \bot_e$

\medskip

$\F{\G_1 \v M : A_1 \quad \G_2 \v N : A_2} {\G_1 , \G_2 \v \< M,N
\> : A_1 \etrd A_2} \, \etrd_i$ \hspace{0.5cm} $\F{\G \v M : A_1
\etrd A_2} {\G \v  (M \; \pi_i): A_i } \, \etrd_e$

\medskip

 $\F{\G \v M :
A_j} {\G \v \o_j M : A_1 \ou A_2} \, \ou_i$

\medskip

$\F{\G \v M : A_1 \ou A_2 \quad \G_1 , x_1 : A_1 \v N_1 : C \quad
\G_2 , x_2 : A_2 \v N_2 : C} {\G,\G_1,\G_2 \v (M\;[x_1.N_1,
x_2.N_2]) : C} \, \ou_e$
\medskip
\end{center}
\end{defn}

\noindent {\bf Remark}

Note that, in definition \ref{pm}, the letter $\G$ represents a
finite multi-set of formulae whereas, in definition \ref{tr}, it
represents a finite multi-set of {\it indexed} formulae i.e. a
finite set of pairs  denoted as $x:A$ or $\a : \neg A$ where $x
\in {\cal V}$, $\a \in {\cal W}$ and
 $A \in {\cal F}$ (where each variable  occurs
only once).

In the rest of the paper, we will continue to use the same
notation for these two formally distinct notions. Such a multi-set
will be called a {\it context}.  In a particular sentence which of
the two notions is meant will usually be clear ... from the
context.

\begin{defn}
The set $G$ of goals is the set of ordered pairs denoted as  $\G
\v  A$  where
  $A \in {\cal F}$  and  $\G$ is a context.
\end{defn}

\subsection{Normal terms and proofs}

To avoid to have, for each formula, either zero or infinitely many
proofs, we  only consider  proofs satisfying two conditions.

\begin{enumerate}
  \item The first one is usual : we only look at normal proofs i.e. proofs
with no cuts i.e. proofs such that the term that represents it is
normal i.e. cannot be reduced by the reduction rules of definition
\ref{redex} below which corresponds  to the usual notion of cut
elimination in natural deduction. Since every term is normalizing
i.e. can be reduced to a normal term (cf. theorem \ref{sub}, item
1), if a formula has a proof then it also has a normal proof. Thus
the restriction does not change the problem.
  \item The second restriction, though quite natural,  is less usual but
also necessary to avoid to have, for each formula, either zero or
infinitely many proofs.  It is as follows.

\begin{enumerate}
  \item When we are in one of the branch
   of a proof by case (we have $A \vee B$ as an hypothesis and we assume, for example, $A$), we are no more allowed to, again,
   distinguish the same two cases i.e. we delete the hypothesis $A \vee
   B$.
  \item We forbid to prove $\bot$ or $\neg C$ by
   contradiction.
  \item When  we are in a part of the proof in which
   we already have assumed $\neg B$, toward a contradiction,  we
   are no more allowed to prove $B$ by contradiction.
\end{enumerate}

\noindent A proof satisfying these three conditions will be called
{\it fair}.  It is  easy to check that if a formula is provable
then it has a fair (normal) proof and thus asking for fairness
does not change the problem. Since fairness is less usual, we will
mention in the comments of section \ref{eq} where it appears in
the research for proof.

\end{enumerate}

Note finally that we may want to add some (optional) restrictions
to the number of proofs we are looking for. They will be given in
section \ref{rest}.

\begin{defn}\label{redex}

The reduction rules for the $\l\m^{\f\etrd\ou}$-calculus are given
below.
 Variables $M,N,L$ are in ${\cal T}$ and  $\ep$ is in
 ${\cal E}$.
 A variable $x$ belongs to ${\cal V}$ while $\a$ is taken from  ${\cal W}$.

 \begin{center}

  $(\l x. M \; N) \trrd_{\b} M[x:=N]$

 \medskip

$(\< M_1,M_2 \> \; \pi_i) \trrd M_i$

 \medskip

 $(\o_i M\; [x_1. N_1 ,x_2. N_2]) \trrd N_i[x_i:=M]$

  \medskip

 $(M \; [x_1.N_1 , x_2.N_2] \; \ep)
\trrd (M \; [x_1.(N_1 \; \ep) , x_2.(N_2 \; \ep)])$

 \medskip

$(\mu \a. M \; \ep) \trrd_{\m} \mu \a. M[(\a \; L):= (\a \; (L \;
\ep))]$
\end{center}
\end{defn}

\noindent {\bf Remarks}

 - The first three rules correspond to the elimination of a
logical cut: an introduction rule is immediately followed by the
corresponding elimination rule.

- The fourth rule corresponds to the so-called {\it permutative
conversion}: if a case analysis is followed by an elimination rule
the elimination can be done before the case analysis.

- The last rule corresponds to the so-called {\it classical cuts}

\noindent Note that the two last rules  are necessary to ensure
that a normal proof has the sub-formula property (cf. theorem
\ref{sub}, item 2).

\begin{defn}
Let $t$ be a $\l\m^{\f\etrd\ou}$-term and $g = \G \v A$ be a
  goal.
\begin{enumerate}
  \item  We say that $t$ is a proof of $g$ if $\G \v t :A$.
    \item We say that $t$ is  normal if it contains no redex i.e.
if it cannot be reduced by the rules of definition \ref{redex}.
\end{enumerate}

\end{defn}

\begin{thm}\label{sub}
Let $t$ be a proof of  $g=\G \v A$. Then,

\begin{enumerate}
  \item $t$ can be reduced into a normal proof of $g$.
  \item If $t$ is normal and $B$ is a formula that occurs in the
  typing tree of $t$ then, there is a
sub-formula $C$ of a formula in $\{A\} \cup \G$ such that $B=C$ or
$B = \neg C$.
\end{enumerate}
\end{thm}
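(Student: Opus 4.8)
The plan is to prove the two items separately, each by induction on the structure of the term $t$ (equivalently, on the height of the typing derivation), since both are essentially standard facts about natural deduction / the $\lambda\mu$-calculus adapted to the connectives and reduction rules of Definitions \ref{lm} and \ref{redex}.

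\medskip

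\textbf{Item 1 (weak normalization).} First I would establish that the one-step reduction relation $\trrd$ of Definition \ref{redex} is \emph{strongly} normalizing on typed terms — or at least weakly normalizing, which is all that is claimed. The cleanest route is to exhibit a terminating reduction strategy together with a well-founded measure. The five rules split into three groups: the logical cuts ($\beta$, the $\pi_i$ rule, the $\omega_i/[\,\cdot\,]$ rule), the permutative conversion (pushing an elimination $\eps$ through a case split), and the classical ($\mu$) cut. For typed terms one assigns to each redex a \emph{degree} built from the type of the cut formula, and shows that contracting an innermost redex of maximal degree does not create new redexes of that degree or higher — the usual Tait/Turing-style argument. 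The permutative and $\mu$-rules need a secondary measure (e.g. the size of the term, or the number/size of the subterms $N_i$ duplicated, since the $\mu$-rule $(\mu\a.M\;\eps)\trrd\mu\a.M[(\a\;L):=(\a\;(L\;\eps))]$ substitutes $\eps$ into possibly many positions) ordered lexicographically below the degree. Packaging this as a lexicographic induction on $(\text{maximal cut degree},\ \text{secondary measure})$ yields a normalizing sequence; since $\trrd$ is type-preserving (subject reduction, proved by the standard case analysis on the redex contracted — substitution lemma for $x:=N$ and for the $\mu$-substitution), the normal form is again a proof of $g$. I would only sketch this, citing \cite{Par92} and \cite{DGro01} for the corresponding results on $\lambda\mu$ and its extension to $\etrd,\ou$, since the added connectives and the fairness conditions do not affect normalization.

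\medskip

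\textbf{Item 2 (subformula property).} Here I would argue by induction on the normal term $t$, classifying it by its outermost constructor, and prove the slightly stronger statement: every formula $B$ labelling a node of the typing tree of a \emph{normal} $t$ with conclusion $\G\v A$ satisfies $B=C$ or $B=\neg C$ for some subformula $C$ of a formula in $\{A\}\cup\G$. The key structural fact to isolate first is the shape of a normal term: because $t$ is normal, no introduction constructor ($\l$, $\<-,-\>$, $\o_i$, $\mu$) sits immediately under the matching elimination ($\cdot\,\eps$ with $\eps\in\{\pi_i,[\,\cdot\,,\cdot\,]\}$ or an application head), and — crucially — the permutative and $\mu$ reduction rules being spent means that the \emph{head} of any application spine is a variable (from $\V$ or $\W$), not a case-split or a $\mu$-abstraction or an application of a case-split. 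So a normal term is, at the head, either an introduction (then $A$ is a $\f$/$\etrd$/$\ou$/$\bot$-formula and the premises have as their target a subformula of $A$, and the contexts only grow by subformulae of $A$ — done by IH), or a $\mu\a.M$ (then $M$ has target $\bot$ and context $\G,\a:\neg A$, so any formula in $M$'s tree is $\bot$, or a subformula/negated-subformula of something in $\{\bot\}\cup\G\cup\{\neg A\}$, and $\neg A = A\f\bot$ has $A$ as subformula — IH applies), or an elimination spine $x\,\eps_1\cdots\eps_n$ headed by a variable $x:D$ with $D\in\G$: then each successive type along the spine is obtained from $D$ by peeling off $\f$, $\etrd$ (via $\pi_i$), or $\ou$ (via a case split $[x_1.N_1,x_2.N_2]$), so all these intermediate types, and the types of the arguments fed in, are subformulae of $D$; and inside the branches $N_i$ of any $[x_i.N_i]$ the new hypothesis $x_i:A_i$ is a subformula of $D$ while the target is still $C$ (a subformula of $A$), so the IH on $N_i$ closes it. The $\bot_i$ rule node contributes $\bot$ itself (allowed, as $\bot$ is a subformula of $\neg A=A\f\bot\in\G$) and the premise's type $A$ which is a subformula of $\neg A$.

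\medskip

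The main obstacle I anticipate is \emph{not} item 1 — that is routine modulo citing the literature — but getting the normal-form characterization in item 2 exactly right, in particular verifying that the two non-logical rules (permutative conversion and the $\mu$-rule) genuinely force application spines to be variable-headed, so that no "hidden" formula can appear. Concretely, one must check that in a normal term there is no subterm of the form $(M\,[x_1.N_1,x_2.N_2]\,\eps)$ (killed by the fourth rule) and no $(\mu\a.M\,\eps)$ (killed by the fifth rule), and no $(M\,[x_1.N_1,x_2.N_2])$ whose own head $M$ is again a case split or a $\mu$; chasing these exclusions through the grammar of $\E$ is where the care is needed, and it is exactly the point the remark after Definition \ref{redex} is flagging. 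Once that normal-form lemma is in hand, item 2 is a clean structural induction as outlined.
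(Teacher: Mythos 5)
Your proposal is correct and takes essentially the same route as the paper, whose entire proof of this theorem is the word ``Standard'' together with citations to Troelstra--Schwichtenberg and van Dalen: your sketch (a cut-degree/Turing-style normalization argument for item 1, and for item 2 a structural induction resting on the fact that the permutative and $\mu$-rules force application spines in normal terms to be variable-headed) is precisely the standard argument the paper is deferring to. Note also that the normal-form characterization you isolate as the key lemma for item 2 is essentially what the paper later states and proves separately as Lemma \ref{normal}.
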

\begin{proof}
Standard. See for example \cite{Tro} or \cite{Vand}.
\end{proof}

\begin{thm}\label{thm}
There is an algorithm that, given a formula $F$, computes the
number (i.e. either an integer or $\infty$) of distinct normal and
fair proofs of $F$.
\end{thm}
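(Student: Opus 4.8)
The plan is to reduce the counting problem to finding the least fixed point of a finite system of polynomial equations over the lattice $\N \cup \{\infty\}$, exactly as announced in the introduction. First I would observe that, by Theorem~\ref{sub}, a normal proof of $\G \v A$ has the sub-formula property: every formula appearing in its typing tree is a sub-formula (or the negation of a sub-formula) of a formula in $\{A\}\cup\G$. Consequently, for a fixed input $F$, there is a \emph{finite} set $S$ of goals $\G \v B$ that can possibly occur anywhere in a normal, fair proof of $F$: the right-hand side $B$ ranges over the finitely many sub-formulae of $F$ (and their negations), and the context $\G$ ranges over multi-sets built from the same finite stock of indexed formulae. Fairness is precisely what keeps contexts from growing without bound in the problematic ways (re-splitting on the same disjunction, nested proofs-by-contradiction of the same negated formula), so that $S$ is genuinely finite and effectively computable from $F$.

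Next I would analyse the shape of a normal fair proof by looking at its last rule. Because the term is normal, the last rule is either an introduction rule (for $\f$, $\etrd$, $\ou$, or $\bot_e$) applied to the head connective of $B$, or a sequence of elimination rules applied to an axiom — i.e., the proof is ``head-normal'': one picks a hypothesis $x:C$ from $\G$, applies a chain of eliminations ($\f_e$, $\etrd_e$, $\ou_e$, $\bot_i$) determined by $C$, and recursively proves the resulting sub-goals. For each goal $g \in S$ one thereby writes down an equation
\[
n_g \;=\; \sum_{\text{last rules } r \text{ applicable to } g}\ \prod_{\text{premises } g' \text{ of } r} n_{g'},
\]
where $n_g$ denotes the number of distinct normal fair proofs of $g$, the outer sum ranges over the finitely many admissible ways to finish the proof of $g$, and the product ranges over the premises of that rule (all of which again lie in $S$). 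The key points to check here are: (i) this case analysis is exhaustive and the choices are \emph{mutually exclusive}, so distinct combinations of choices give distinct proofs — hence addition and multiplication are the right operations, with $0$ for ``no proof'' and the empty product giving $1$ for an axiom; (ii) the right-hand sides are monotone polynomial functions of the unknowns $\{n_{g'} : g' \in S\}$ over the complete lattice $(\N\cup\{\infty\}, \le)$; and (iii) the conventions $x + \infty = \infty$ and $x \cdot \infty = \infty$ (for $x\ne 0$) make the counting correct when infinitely many proofs exist.

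Then I would invoke the Knaster–Tarski theorem: the system $\vec n = P(\vec n)$, with $P$ a monotone operator on the finite power of the complete lattice $\N\cup\{\infty\}$, has a least fixed point, and it is reached by iterating $P$ starting from $\vec 0$. The final step is to argue that this least fixed point is the vector we want — that $n_g$, as actually defined (the number of normal fair proofs of $g$), coincides with the least-fixed-point value, not merely some fixed point. This is where I expect the main obstacle: one direction is easy, since the true proof-counts clearly satisfy the equations, hence dominate the least fixed point; for the reverse, one must show the true count is itself \emph{minimal} among solutions, which amounts to showing that every normal fair proof is ``finitely generated'', i.e., built in finitely many steps, so that it is captured by some finite stage of the iteration $\vec 0, P(\vec 0), P^2(\vec 0), \dots$. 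Equivalently: assign to each proof a height, check that each rule strictly increases height downward, and conclude by induction on height that a proof counted by $n_g$ is counted at stage $P^k(\vec 0)$ for $k$ its height. Once this is done, the algorithm is immediate — compute $S$ from $F$, write out the finite polynomial system, iterate $P$ from $\vec 0$; the iteration stabilises after finitely many steps because the lattice has finite height on each coordinate only ``up to $\infty$'', but one can detect stabilisation by noting that a coordinate that has not reached its final value strictly increases, and a coordinate is set to $\infty$ as soon as it would exceed any fixed bound depending on $|S|$ — read off $n_{(\emptyset \v F)}$.
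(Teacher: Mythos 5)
Your overall strategy (a finite monotone polynomial system over $\overline{\mathbb N}$, Knaster--Tarski, least fixed point equals the proof count) is the same as the paper's, but there is a genuine gap at exactly the point the paper singles out as the main technical difficulty: the finiteness of the set of goals. The sub-formula property bounds which \emph{formulae} can occur, but a context is a multi-set, and nothing prevents the same hypothesis from being added arbitrarily many times (each use of $\f_i$, of $\ou_e$, or of $\bot_e$ adds a declaration, and the same formula can be re-added in nested sub-goals). So your set $S$ of ``multi-sets built from a finite stock'' is infinite, and the claim that fairness is what keeps contexts from growing is not correct: fairness only forbids re-splitting the \emph{same} disjunction hypothesis and certain repeated proofs by contradiction. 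The paper's fix is to identify goals that are equivalent up to multiplicities of hypotheses, and this rests on a non-trivial lemma (Lemma~\ref{27}): if a goal $g'\sim g$ reappears strictly inside a partial proof of $g$, then $\di(g)=\di(g')$ (indeed both are then $0$ or $\omega$). Making this identification sound is what forces the bookkeeping order on the variables (a name may only be reused for an equivalent goal occurring below in the same branch), and termination then follows from K\"onig's lemma (Lemma~\ref{fin}), not from a direct finiteness of $S$. Note also that, once goals are identified up to equivalence, even your ``easy direction'' (the true counts satisfy the equations) depends on Lemma~\ref{27}.

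A second, smaller gap is the claim that one can simply iterate $P$ from $\vec 0$ and detect stabilisation. The iteration need not stabilise (for $x=x+1$ the least solution is $\omega$ but $P^k(\vec 0)=k$ for all $k$), and finite least solutions can be doubly exponential in the number of variables (e.g.\ $x_1=2$, $x_{i+1}=x_i\cdot x_i$), so thresholding at ``a bound depending on $|S|$'' is not justified as stated. The paper instead computes the least fixed point exactly by eliminating variables one at a time: the least solution of $x=f_0(\Vec{y})+\sum_{i\ge 1}f_i(\Vec{y})\,x^i$ is the polynomial $f_0(\Vec{y})+f_0(\Vec{y})\cdot\bigl(\sum_{i\ge 1}f_i(\Vec{y})\bigr)\cdot\omega$, and substituting this back reduces the number of unknowns (Lemmas~\ref{prepa} and~\ref{compute}). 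You would need either this elimination argument or a proved bound on when a coordinate can be declared infinite.
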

\begin{proof}
This is an immediate corollary of theorem \ref{main} below whose
statement and  proof is given in the next section.
\end{proof}

\section{Proof of the main result}

\subsection{The idea of the proof}
The idea of the proof is quite simple. To each goal $g$ of the
form  $\G \v A$ we associate a variable $n_g$  that, intuitively,
gives the number of normal and fair proofs of $g$. By looking at
the possible ways of proving $g$ (either use an introduction rule
or an elimination rule or a proof by contradiction) we get
equations relating the $n_g$. We will show that the number we are
looking for is the minimal solution of this set of equations. The
two main technical difficulties are the following.

- We have to be able to compute the solution of these equations. This follows from the fact that they
 only use integers, addition
 and
 multiplication. An addition corresponds to the possibility of proving
 a goal in different ways. A multiplication means that, to prove the goal, we have to prove two
 different  things. Thus the equations are polynomial and we will show that, for this kind of
 equations, we can always compute the minimal solution.

- The other point is a bit more difficult. To be able to compute
its solution, the set of equations must be finite but, without
sufficient care, it is not ! Since, by the sub-formula property
(theorem \ref{sub} above), we know that the formulae that appear
in a normal proof are sub-formulae of the initial formula, the set
of goals must, intuitively, be finite (which would imply that the
set of equations also is finite) but since, in $\G$, a formula can
be repeated many times it is not true that the set of goals is
finite. To solve this problem, we proceed as follows. When, in a
proof of some goal we introduce a new goal, say $h$, which is the
same as a goal $h'$ that has already been introduced except that
it adds some hypothesis that were already present in $h'$, we do
not consider it as a  new one i.e. we do not build an equation for
it. This is because we can show that $h,h'$ have the same number
of proofs. But, to do that, we need some book keeping because to
show that $h,h'$ have the same number of normal and fair proofs,
we need the fact that $h$ and $h'$ are, somehow, in the same part
of a proof. This will be ensured by the order we put on the
variables $n_g$. Doing in this way, Konig's lemma ensures that the
set of equations is finite.

\subsection{Polynomials}

\begin{defn}
\begin{enumerate}
  \item The set $\N \cup \{ \omega \}$ will be denoted as
$\NA$
  \item The usual order and operations on $\N$ are extended to
  $\NA$ by
\begin{itemize}
  \item  $i \leq \omega$ and $i + \omega = \omega + i = \omega$ for every $i \in \NA$,
  \item $0 \cdot \omega = \omega \cdot 0 = 0$,
  \item
 $i \cdot \omega = \omega \cdot i = \omega$ for every $i  \neq 0$.

\end{itemize}

  \item The set $\NA^k $ is
 naturally ordered by $(a_1 ,...,a_k) \leq (b_1 ,...,b_k )$
 if $a_i \leq b_i$ for all $i$.
\end{enumerate}

\end{defn}

\begin{lem} $\NA^k$ is a complete lattice.
\end{lem}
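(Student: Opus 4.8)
The plan is to prove the statement first for $k=1$, i.e.\ that $\NA$ is a complete lattice, and then deduce the general case by a routine product argument. Recall that a poset is a complete lattice as soon as every subset has a least upper bound: the greatest lower bound of a set $S$ is then recovered as the least upper bound of the (possibly empty) set of its lower bounds, while the top and bottom elements are the least upper bounds of the whole poset and of $\emptyset$ respectively. So in each case it suffices to exhibit suprema.

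For $\NA$ itself, let $S \subseteq \NA$, and distinguish three cases. If $S = \emptyset$, then $0 = \sup S$. If $\o \in S$, or if $S$ is infinite (hence cofinal in $\N$), then $\o$ is an upper bound of $S$ and obviously the least one, so $\sup S = \o$. Otherwise $S$ is a nonempty finite subset of $\N$ and $\sup S = \max S$. This is precisely the point where adjoining $\o$ to $\N$ is used: $\N$ by itself has no top element, and its unbounded subsets have no supremum.

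For $\NA^k$ with the componentwise order, let $S \subseteq \NA^k$ and, for $1 \le i \le k$, let $S_i = \{ a_i : (a_1,\dots,a_k) \in S \}$ be the $i$-th projection of $S$. By the previous paragraph each $S_i$ has a supremum $s_i \in \NA$, and I claim $(s_1,\dots,s_k) = \sup S$. It is an upper bound of $S$, since $s_i \ge a_i$ for every $(a_1,\dots,a_k) \in S$ and every $i$; and if $(b_1,\dots,b_k)$ is any upper bound of $S$, then $b_i$ is an upper bound of $S_i$, hence $b_i \ge s_i$ for all $i$, i.e.\ $(b_1,\dots,b_k) \ge (s_1,\dots,s_k)$. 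Thus every subset of $\NA^k$ has a supremum, and therefore $\NA^k$ is a complete lattice.

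No step here is a genuine obstacle; the only points requiring any care are the handling of the empty set and of infinite (unbounded) subsets of $\N$ in the base case, and the verification that suprema in the product are formed coordinatewise — both dealt with above.
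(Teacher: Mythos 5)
Your proof is correct and simply spells out in full the argument the paper dismisses as ``Obvious'': suprema exist coordinatewise because $\NA$ itself has all suprema (with $\o$ absorbing the unbounded case), and a poset with all suprema is a complete lattice. Nothing to object to.
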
\label{completelattice}
\begin{proof}
Obvious.
\end{proof}

\bt{defn}\label{polynomial}
\begin{enumerate}
  \item The set of polynomials is the least set of functions (of several
variables) from $\NA$ to $\NA$ that contains the constant
functions and is closed by addition and multiplication.
 \item The order on polynomials is the point-wise order, i.e. if $f(x_1, ...,
  x_n),g(x_1, ...,
  x_n)$ are polynomials,
  $f \leq g$ iff $\forall x_1, ...,
  x_n,
f(x_1, ...,
  x_n) \leq g(x_1, ...,
  x_n)$.
\end{enumerate}
 \et{defn}

\begin{defn}
\begin{enumerate}
  \item  A polynomial system of equations ($PSE$ for short) is a
  set $\{E_1, ..., E_n\}$  where $E_i$ is  the equation $x_i=f_i(x_1, ...,
  x_n)$ and $f_i$ is a polynomial in the variables $x_1, ...,
  x_n$.
Such a system will be abbreviated as $\Vec{x}=F(\Vec{x})$.
  \item Let $\Vec{x}=F(\Vec{x})$ by a $PSE$. We say that $\Vec{a}$
   is a minimal solution  of
the system if $\Vec{a} = F(\Vec{a})$ and, for every $\Vec{b}$ such
that $\Vec{b}=F(\Vec{b})$, we have $\Vec{a} \leq \Vec{b}$.
\item We denote by $F^i$ the $i$-iteration of $F$, i.e.
$F^0(\Vec{x})=\Vec{x}$ and $F^{i+1}(\Vec{x})=F(F^i(\Vec{x}))$.
\end{enumerate}
\end{defn}

\bt{prop}\label{tarski}
 Let $\Vec{x}=F(\Vec{x})$ be a $PSE$. Then,
 this system has a (unique) minimal solution $\Vec{a}$
 (that we will denote  by $min(F)$). Moreover
 we have $min(F)=\bigsqcup_{i=0}^{\infty} F^i (
 \Vec{0})=\bigcap \{\Vec{x} \ | \ F(\Vec{x}) \leq \Vec{x} \}$.

\end{prop}
\begin{proof}
Since it is easy to check that $F$ is increasing, this is a
special  case  of the Knaster-Tarski lemma.
\end{proof}

\begin{lem}\label{prepa}
Let $f(x,\Vec{y}) = f_0
 (\Vec{y})+ \sum_{i\geq 1} f_i
(\Vec{y}) x^i$ be a polynomial (where  $\Vec{y}$ is possibly
empty) and let $h(\Vec{y})=\sum_{i\geq 1} f_i (\Vec{y})$. Then,
$g(\Vec{y})=f_0 (\Vec{y}) + f_0 (\Vec{y}) \cdot h (\Vec{y}) \cdot
\omega$ is the minimal solution of the equation $x=f(x,\Vec{y})$.
\end{lem}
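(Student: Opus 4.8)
The plan is to compute the minimal solution of $x = f(x, \Vec{y})$ directly, for a fixed but arbitrary value of $\Vec{y}$, using the characterisation $min(F) = \bigsqcup_{i \geq 0} F^i(\Vec{0})$ from Proposition \ref{tarski}. Write $f(x) = f_0 + \sum_{i \geq 1} f_i x^i$ where $f_0 = f_0(\Vec{y})$ and $f_i = f_i(\Vec{y})$ are now fixed elements of $\NA$, and set $h = h(\Vec{y}) = \sum_{i\geq 1} f_i$. I would split into two cases according to whether $f_0 = 0$ or $f_0 \neq 0$, and in the latter case whether or not $h = 0$, since the claimed answer $g = f_0 + f_0\cdot h\cdot\omega$ behaves differently in each.

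First, the case $f_0 = 0$: then $f(0) = 0$, so every iterate $f^i(0) = 0$, and the least fixed point is $0$, which matches $g = 0 + 0\cdot h\cdot\omega = 0$. Next, the case $f_0 \neq 0$ and $h = 0$: then $f_i = 0$ for all $i \geq 1$, so $f$ is the constant function $f_0$, the least fixed point is $f_0$, and $g = f_0 + f_0\cdot 0\cdot\omega = f_0$, as required. The interesting case is $f_0 \neq 0$ and $h \neq 0$, where I claim $g = \omega$ (note $f_0\cdot h\cdot\omega = \omega$ since both $f_0$ and $h$ are nonzero). I would first verify $g = \omega$ is a fixed point: $f(\omega) = f_0 + \sum_{i\geq 1} f_i \omega^i = f_0 + (\sum_{i\geq 1} f_i)\omega = f_0 + h\omega = \omega$ since $h \neq 0$. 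Then I would show it is minimal by showing $f^i(0) \to \omega$: since $f_0 \neq 0$, $f(0) = f_0 \geq 1$; since some $f_j \neq 0$ with $j \geq 1$, we get $f(n) \geq f_j n^j \geq n$ whenever $n \geq 1$, and in fact if additionally $f_0 \geq 1$ then $f(n) = f_0 + \sum f_i n^i \geq f_0 + n > n$ for $n$ finite and $\geq 1$. So the sequence $f^i(0)$ is strictly increasing among the finite values as long as it stays finite, hence its supremum in $\NA$ is $\omega$. By Proposition \ref{tarski}, $min = \bigsqcup_i f^i(0) = \omega = g$.

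To complete the argument I need to assemble these into a single statement valid for all $\Vec{y}$ simultaneously: since the minimal solution of the $PSE$ is computed pointwise (Proposition \ref{tarski} gives $min(F) = \bigsqcup F^i(\Vec{0})$, and suprema in $\NA^k$ are coordinatewise, so also pointwise in $\Vec{y}$), the function $\Vec{y} \mapsto (\text{least fixed point of } x = f(x,\Vec{y}))$ is exactly the function that on each input applies the case analysis above, and that function is precisely $g(\Vec{y}) = f_0(\Vec{y}) + f_0(\Vec{y})\cdot h(\Vec{y})\cdot\omega$. One should also check $g$ itself is a genuine solution in the sense required, i.e. $g(\Vec{y}) = f(g(\Vec{y}), \Vec{y})$ for every $\Vec{y}$, which follows from the fixed-point verifications in each case, and that any other solution $b(\Vec{y})$ dominates it pointwise, which follows from minimality in each case.

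The main obstacle, such as it is, is bookkeeping around the arithmetic of $\NA$ — in particular remembering that $0\cdot\omega = 0$, so that the product $f_0\cdot h\cdot\omega$ correctly collapses to $0$ exactly when $f_0 = 0$ or $h = 0$, and correctly equals $\omega$ otherwise. The only genuine content is the observation that once $f_0 \geq 1$ and $h \geq 1$ the iteration strictly increases and therefore escapes to $\omega$; everything else is routine case-checking. No subtlety about convergence is needed beyond Proposition \ref{tarski}, since $\NA$ is a complete lattice and the supremum of any strictly increasing infinite sequence of naturals is $\omega$.
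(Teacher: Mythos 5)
Your proof is correct and follows essentially the same route as the paper: the same three-way case analysis on $f_0(\Vec{y})=0$, $h(\Vec{y})=0$, and the remaining case, with the paper leaving the last case as ``easy to check'' where you supply the iteration argument showing $f^i(0)$ escapes to $\omega$.
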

\begin{proof}
 If $f_0 (\Vec{y} ) =0 $ then the minimal solution is $0$.
 If  $h (\Vec{y}) = 0$,
 then for all $i \geq 1$, $f_i (\Vec{y}) =0$ and the minimal solution
is $f_0 (\Vec{y})$. Otherwise, it is easy to check that the
minimal solution is  $\omega$. In all cases the minimal solution
is $g(\Vec{y})$.
\end{proof}

\begin{lem}\label{compute}
Let $\Vec{x}=F(\Vec{x})$ by a $PSE$. The minimal solution of this
system  can be computed from $F$.
\end{lem}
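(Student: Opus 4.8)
The plan is to prove Lemma~\ref{compute} by induction on the number $n$ of equations in the $PSE$, using Lemma~\ref{prepa} as the base mechanism for eliminating one variable at a time. The idea is a Gaussian-elimination-style argument: pick the last variable $x_n$, treat its equation $x_n = f_n(x_1,\dots,x_n)$ as an equation in the single unknown $x_n$ with the remaining variables $x_1,\dots,x_{n-1}$ as parameters, and apply Lemma~\ref{prepa} to obtain the minimal solution $x_n = g_n(x_1,\dots,x_{n-1})$, where $g_n$ is again a polynomial (with coefficients in $\NA$, possibly involving the constant $\omega$) that is effectively computable from $f_n$. Substituting $g_n$ for $x_n$ in the other equations $E_1,\dots,E_{n-1}$ yields a new $PSE$ in the $n-1$ variables $x_1,\dots,x_{n-1}$, whose right-hand sides are again polynomials (composition of polynomials is a polynomial) and which can be written down effectively.

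The key steps, in order, are: (1) observe that the class of polynomials over $\NA$ is closed under composition and that, given the coefficients of $f_n$, the polynomial $g_n$ of Lemma~\ref{prepa} is computed by a finite explicit formula, so the reduced system $\Vec{x}' = F'(\Vec{x}')$ in $n-1$ variables is computable from $F$; (2) invoke the induction hypothesis to compute the minimal solution $\Vec{a}' = (a_1,\dots,a_{n-1})$ of the reduced system; (3) set $a_n = g_n(a_1,\dots,a_{n-1})$ and check that $\Vec{a} = (a_1,\dots,a_n)$ is the minimal solution of the original system. The base case $n=1$ is exactly Lemma~\ref{prepa}. For step~(3) the verification splits into two directions. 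First, $\Vec{a}$ is a solution: by Lemma~\ref{prepa}, $a_n = f_n(\Vec{a})$ since $a_n = g_n(\Vec{a}')$ is the minimal solution of $x_n = f_n(\Vec{a}',x_n)$; and for $i < n$, $a_i = f_i'(\Vec{a}') = f_i(\Vec{a}', g_n(\Vec{a}')) = f_i(\Vec{a})$ by construction of $F'$. Second, $\Vec{a}$ is minimal: if $\Vec{b} = F(\Vec{b})$ is any solution of the original system, then in particular $b_n = f_n(b_1,\dots,b_{n-1},b_n)$, so $b_n$ is \emph{a} solution (not necessarily minimal) of the one-variable equation $x_n = f_n(b_1,\dots,b_{n-1},x_n)$; hence $g_n(b_1,\dots,b_{n-1}) \leq b_n$ by minimality in Lemma~\ref{prepa}, and since $f_i$ is increasing, $(b_1,\dots,b_{n-1})$ satisfies $b_i \geq f_i'(b_1,\dots,b_{n-1})$, i.e. it is a pre-fixpoint of $F'$; by Proposition~\ref{tarski} applied to $F'$ (the $\bigcap$ characterization of $min(F')$) we get $\Vec{a}' \leq (b_1,\dots,b_{n-1})$, and then monotonicity of $g_n$ gives $a_n = g_n(\Vec{a}') \leq g_n(b_1,\dots,b_{n-1}) \leq b_n$, so $\Vec{a} \leq \Vec{b}$.

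I expect the main obstacle to be step~(3), specifically making the minimality argument fully rigorous: one must be careful that substituting the \emph{minimal} solution $g_n$ of the last variable into the other equations does not ``lose'' any genuine solution of the full system, and conversely does not ``create'' spurious small ones. The point that saves the argument is monotonicity of all polynomials over $\NA$ (each $f_i$ is increasing, already noted in the proof of Proposition~\ref{tarski}) together with the pre-fixpoint characterization of least fixpoints from Proposition~\ref{tarski}; these let one push inequalities through substitution in both directions. A secondary, more bookkeeping-level point is checking that the reduced right-hand sides $f_i' = f_i(\cdot, g_n(\cdot))$ are honestly polynomials in the sense of Definition~\ref{polynomial} even though $g_n$ may contain the constant $\omega$ — but this is immediate since $\omega$ is an allowed constant and the polynomials are closed under addition, multiplication, and hence composition.
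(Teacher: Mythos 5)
Your proposal is correct and follows essentially the same route as the paper: eliminate one variable with Lemma~\ref{prepa}, recurse on the reduced system, and establish minimality via monotonicity together with the pre-fixpoint characterization $\min(F')=\bigcap\{\Vec{y}\mid F'(\Vec{y})\leq\Vec{y}\}$ from Proposition~\ref{tarski}. Your version is slightly more explicit (a clean induction on $n$, comparison against an arbitrary solution $\Vec{b}$ rather than just $\min(F)$, and the remark that $g_n$ remains a polynomial even with $\omega$ as a coefficient), but the substance is identical.
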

\begin{proof}
The algorithm to compute this solution is the following. Choose
one variable, call it $x$ and call $\Vec{y}$ the remaining
variables. The system then looks like:  $x= f(x,\Vec{y})$ and
$\Vec{y}=G(x,\Vec{y})$. Use lemma \ref{prepa} to find the
polynomial $g(\Vec{y})$ which is the minimal solution of the
equation $x= f(x,\Vec{y})$. Repeat the process with the system
$\Vec{y}=G(g(\Vec{y}),\Vec{y})$. It is clear that, in this way, we
find a solution of the system. Denote by $(a,\Vec{b})$ this
solution.  By proposition \ref{tarski}, let $(x_0,
\Vec{y_0})=min(F)$. Since $(a,\Vec{b})$ is a solution of the
system we have $(x_0, \Vec{y_0})\leq (a,\Vec{b})$. Thus it remains
to show that $(a,\Vec{b})\leq (x_0, \Vec{y_0})$. Since $x_0$ is a
solution of the equation
 $x = f(x, \Vec{y_0} )$ we have $g( \Vec{y_0} ) \leq x_0$. Define $F'$ by
  $F'(\Vec{y})= G(g(\Vec{y}), \Vec{y})$. By
 the monotonicity of $G$,
 $F' ({\Vec{y}}_0 )= G(g( {\Vec{y}}_0 ),
 {\Vec{y}}_0 ) \leq G(x_0 ,{\Vec{y}}_0 ) = {\Vec{y}}_0 $.
 But since the minimal solution of  $F'$ is
 $\bigcap \{\Vec{y} \ | \ F'(\Vec{y}) \leq \Vec{y} \}$ we have $\vec{b} \leq \Vec{y_0}$. By the monotonicity of $g$,
 $a = g( b_0 ) \leq g( {\Vec{y}}_0 ) \leq
 x_0$.
\end{proof}

\subsection{Some preliminary results}

\begin{defn}
\begin{enumerate}
  \item We will denote by ${\cal F'}$ the set of formulae to which we
  have added a special element denoted as $*$.

  \item Let $E$ be a set of lists of elements of  ${\cal F'}$ and $A$ be a formula. We will
denote by $[A :: E]$  the set $\{[A:: L] \ | \ L \in E\}$ where
$[A:: L]$ denotes the list $L$ on the beginning of which we have
added $A$.
\end{enumerate}
\end{defn}

\noindent {\bf Remark}

Note that the definition implies that, if $E$ is empty, then so is
$[A :: E]$.

\begin{defn}
Let $A,B$ be formulae. The set $Elim (A,B)$  of lists of elements
of ${\cal F'}$ is defined, by  induction on the size of $A$, in
the following way.
\begin{enumerate}

  \item If $A=B$, then $Elim (A,B)={[*]}$.

  \item If $A \neq B$ then,

  - If $A$ is atomic,  $Elim (A,B)=\emptyset$

   - If $A=C \f D$,  $Elim (A,B)=[C :: Elim(D,B)]$

   - If $A= A_1 \etrd A_2$,  $Elim (A,B)= Elim(A_1,B) \cup
  Elim(A_2,B)$

  - If $A= A_1 \ou A_2$, $Elim (A,B)= \{[A]\}$
\end{enumerate}
\end{defn}

\begin{lem}
Let $A,B$ be formulae and let $L \in Elim(A,B)$. Then the last
element of $L$ is either $*$ or a disjunction.
\end{lem}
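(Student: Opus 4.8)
The plan is to proceed by induction on the size of $A$, following exactly the case split used in the definition of $Elim(A,B)$. The statement to establish is that every list $L \in Elim(A,B)$ has last element equal to $*$ or to a disjunction. First I would dispose of the base cases: if $A = B$ then $Elim(A,B) = \{[*]\}$, whose only member is the singleton list $[*]$, ending in $*$; and if $A \neq B$ with $A$ atomic, then $Elim(A,B) = \emptyset$ and there is nothing to check.

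Next I would handle the three inductive cases for $A \neq B$. If $A = A_1 \ou A_2$, then $Elim(A,B) = \{[A]\}$, and the single list $[A]$ ends in $A$, which is a disjunction, so we are done directly (no induction needed here). If $A = A_1 \etrd A_2$, then $Elim(A,B) = Elim(A_1,B) \cup Elim(A_2,B)$; since $A_1$ and $A_2$ are both smaller than $A$, the induction hypothesis applies to each, so every $L$ in either set ends in $*$ or a disjunction, hence so does every $L$ in the union. The remaining case is $A = C \f D$, where $Elim(A,B) = [C :: Elim(D,B)]$. Here an arbitrary element of $Elim(A,B)$ has the form $[C :: L']$ with $L' \in Elim(D,B)$. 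By the remark following the definition of $[A :: E]$, this set is empty when $Elim(D,B)$ is empty, so we may assume $L'$ is a genuine list in $Elim(D,B)$; since $D$ is smaller than $A$, the induction hypothesis gives that $L'$ ends in $*$ or a disjunction. The last element of $[C :: L']$ equals the last element of $L'$ (prepending $C$ does not change the tail), so we conclude.

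The only subtlety — and the one point I would be careful to state explicitly rather than gloss over — is the $\f$ case: one must observe that $[C :: Elim(D,B)]$ is empty precisely when $Elim(D,B)$ is empty, so that there is no "empty list" hiding in $[C :: E]$ that could vacuously fail to have a last element, and that the last-element function is unchanged under prepending. Neither of these is hard, but together they are what makes the induction go through cleanly. Apart from that, the argument is a routine structural induction with no real obstacle.
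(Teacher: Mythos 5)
Your proof is correct and follows the same route as the paper, which simply states ``By induction on $A$'' and leaves the case analysis implicit; you have filled in exactly the cases that proof intends. The extra care in the $\f$ case (that prepending preserves the last element and that $[C::E]$ is empty when $E$ is) is sound and harmless.
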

\begin{proof}
By induction on $A$.
\end{proof}

\noindent {\bf Comments and examples}

\begin{enumerate}
  \item The role of the particular symbol $*$ and the set $Elim(A,B)$ will
become clear in item 3 of the next lemma. The intuition is the
following. $Elim(A,B)$ is the set of lists $L$ satisfying the
following properties.

- If $L=[A_1 :: ... :: A_n :: *]$  then, to be able to prove $B$
 in some context $\G$ {\em by using a sequence of elimination rules starting with  $A$}, it is enough to
 prove $A_1, ..., A_n$ in the context $\G$.

 - If $L=[A_1 :: ... :: A_{n-1} :: D_1 \ou D_2]$ then, to be able to
prove $B$
 in some context $\G$ {\em by using a sequence of elimination rules starting with  $A$}, it is enough to
 prove $A_1, ..., A_{n-1}$ in the context $\G$ and to prove $B$ both in the contexts
  $\G \cup \{D_1\}$ and $\G \cup \{D_2\}$.
  \item Assume $B, B'$ are distinct atomic formulae and $A=(A_1 \f D_1 \ou
  D_2) \etrd (A_2 \f A_3 \f B) \etrd (A_4 \f B')$.
  Then $Elim(A,B)=\{L_1,L_2\}$ where $L_1=[A_1 :: D_1 \ou
  D_2]$ and $L_2=[A_2 :: A_3 :: *]$

\end{enumerate}

\begin{lem}\label{normal}

 Let $t$ be a normal proof of $\G \v B$. Then, $t$ is in one of
 the following form (where the $t_i$ are normal)
  \begin{enumerate}
  \item Either

  - $t=\l x. t_1$, $B=B_1 \f  B_2$ and $\G, x :B_1 \v t_1 : B_2$

  -  $t=\m \a. t_1$ and $\G, \a : \neg B \v t_1 : \bot$

  -  $t=\< t_1,t_2
  \>$, $B=B_1 \etrd B_2$
  and $\G \v t_i : B_i$

  -  $t=\o_i t_1$, $B=B_1 \ou B_2$ and $\G \v t_1 : B_i$.

  \item Or  $t=(\a \ t_1)$ and $\G \v t_1 : A$ where $\G \v \a :
  \neg A$
  \item Or  $t=(x\ t_1 \ ... \ t_n)$ and, for some $A$ such that
  $\G \v x:A$ and some $L \in Elim(A,B)$, we have

  - either $L=[A_1 :: ... :: A_n :: *]$ and the $t_i$ are proofs of $\G \v A_i$

   - or $L=[A_1 :: ... : A_{n-1} :: D_1 \ou D_2]$ and, for $i <n$,  the $t_i$ are
    proofs of $A_i$ and $t_n=[x_1.u_1 , x_2.u_2]$ and the
   $u_i$ are  proofs of $\G, x_i : D_i  \v B$.
  \end{enumerate}
\end{lem}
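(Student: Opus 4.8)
The plan is to prove Lemma~\ref{normal} by a case analysis on the shape of the normal term $t$, following the grammar of Definition~\ref{lm} and using the typing rules of Definition~\ref{tr}. First I would observe that, since $t$ is a proof of $\G \v B$, the last typing rule applied to obtain $\G \v t : B$ is determined by the outermost constructor of $t$. If $t$ is an introduction term --- $\l x.t_1$, $\<t_1,t_2\>$, or $\o_i t_1$ --- or the classical term $\m\a.t_1$, then the corresponding typing rule forces $B$ to have the stated shape and the subterms to have the stated types; this immediately gives case~1. Similarly, if $t = (\a\ t_1)$ then the rule $\bot_i$ forces $B = \bot$ and $\G \v t_1 : A$ where $\a : \neg A \in \G$; this is case~2 (one should note that normality/fairness is what prevents this from being a ``proof by contradiction of $\bot$'', but the structural claim itself is just the typing rule read backwards).

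The substantive case is when $t$ begins with a variable $x \in \mathcal{V}$. Here I would argue that, because $t$ is normal, $t$ must be an iterated application $(x\ \ep_1\ \cdots\ \ep_n)$ of $x$ to a sequence of eliminators $\ep_j \in \mathcal{E}$ (each $\ep_j$ being a term, a projection $\pi_i$, or a case bracket $[x_1.u_1, x_2.u_2]$). Tracking the types along this application spine, if $\G \v x : A$ then each eliminator peels off part of $A$: applying a term consumes an arrow $C \f D$ and requires a proof of $C$; applying $\pi_i$ consumes a conjunction and selects $A_i$; applying a case bracket consumes a disjunction $D_1 \ou D_2$ and requires, in each branch, a proof of the final goal. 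The claim is precisely that the record of ``what is consumed'' along a maximal such spine ending at type $B$ is exactly an element $L$ of $Elim(A,B)$, and conversely. I would prove this by induction on the size of $A$, mirroring the recursive clauses in the definition of $Elim$: the base clause $A = B$ corresponds to the empty spine and $L = [*]$; the arrow clause corresponds to the first eliminator being a term; the conjunction clause corresponds to the first eliminator being a projection; and the disjunction clause corresponds to the spine stopping (a case bracket can only come last, since after it the head is no longer the relevant variable --- indeed the fourth reduction rule of Definition~\ref{redex} would otherwise apply, contradicting normality), which is why $Elim(A_1 \ou A_2, B) = \{[A_1 \ou A_2]\}$.

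The main obstacle I anticipate is keeping the bookkeeping honest in this spine analysis: one must check that normality genuinely forbids the ``bad'' configurations --- in particular that a permutative redex $(M\ [x_1.N_1,x_2.N_2]\ \ep)$ cannot occur, so that a case bracket in the spine is necessarily the last eliminator, and that no $\b$-, projection-, $\ou$-, or $\m$-redex is hiding at the head --- and that in the disjunction subcase the two branch terms $u_1, u_2$ really are typed in the augmented contexts $\G, x_i : D_i \v B$ with the \emph{same} target $B$, which is exactly what the $\ou_e$ typing rule gives. I would also need the previous lemma (the last element of any $L \in Elim(A,B)$ is $*$ or a disjunction) to know that these two subcases are exhaustive. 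The remaining verifications --- that the types of the $t_i$ match $A_1,\dots,A_n$ and that the reconstructed term is well typed --- are routine inductions on $A$ using the typing rules, and I would not spell them out in detail.
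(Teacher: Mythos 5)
Your proposal is correct and follows essentially the same route as the paper: a case analysis on the outermost constructor of the normal term, with the only substantive point being the spine analysis in the head-variable case, where you correctly identify that the permutative-conversion rule of Definition~\ref{redex} forces a case bracket to be the last eliminator (which is exactly why $Elim(A_1 \ou A_2, B)$ stops at the disjunction). Your induction on the size of $A$ to match the spine against $Elim(A,B)$ is just a more explicit rendering of the paper's ``induction on the size of the proof.''
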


\begin{proof}
By induction on the size of the proof. The only non immediate
point is that we cannot use an elimination rule when the type  is
a disjunction. This is  because, otherwise, we will get a proof of
the form $(x \ t_1 \ ... \ t_k \ [x_1.N_1 , x_2.N_2] \ \ep)$ which
is not normal.
\end{proof}

\begin{defn}
Let $t$ be a normal proof. The size of $t$ (denoted as $size(t)$)
is defined as follows.
\begin{enumerate}
  \item $size(\l x. t_1)=size(\m \a. t_1)=size(\o_i t_1)=size(t_1)+1$
  \item $size(\< t_1,t_2\>)=max(size(t_1), size(t_2))+1$
  \item $size((x \ t_1 \ ... \ t_n)=max(size(t_1), ...,
  size(t_n))+1$
\end{enumerate}
\end{defn}

\begin{defn}
\begin{enumerate}
  \item The set ${\cal P}$ of partial (normal) terms is defined by the
following grammar
$${\cal P}:= {\cal V} \ | \ G  \ | \ \l x. {\cal P} \ | \  \m\a. {\cal P} \ | \  \<{\cal P}, {\cal P}\> \ | \
\o_i {\cal P} \ | \  (x \ {\cal P} \ ... \ {\cal P})$$
  \item The typing rules for ${\cal P}$ are the ones of ${\cal T}$
  plus the additional rule

  \begin{center}
$\F{} {\G \v g:A} $ \hspace{1cm} if $g=\G \v A$
\end{center}

\end{enumerate}
\end{defn}

\noindent {\bf Remark}

A normal proof is partial term that contains no goal.

\begin{defn}
 Let $g$ be a  goal. We denote
by $\di(g)$ the number (considered as an element of $\NA$) of
distinct normal and fair proofs of $g$.
\end{defn}

\begin{defn}
\begin{enumerate}
  \item Let $\G, \G'$ be two contexts. We say that $\G$ is equivalent to $\G'$
  (denoted as $\G \sim \G'$) if, for any $A \in {\cal F}$, $\G$
  contains a declaration $x : A$ iff $\G'$ contains a declaration $y :
  A$.
  \item Let  $g= \G \v B$ and $g'=\G'\v B'$.  We say that $g$ is equivalent to $g'$
  (denoted as $g \sim g'$) if $B=B'$ and  $\G \sim \G'$.
\end{enumerate}
\end{defn}

Thus two goals $g,g'$ are equivalent iff their conclusions are the
same and they have same set of hypothesis but  each hypothesis may
appear a different number of times in $g$ and $g'$.

\begin{lem}\label{27}
Let $t$ be a partial proof of goal $g$. Assume $t\neq g$ and
contains some goal $g' \sim g$.   Then $\di(g)=\di(g')$.
\end{lem}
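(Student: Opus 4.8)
The plan is to show that having an extra copy of a hypothesis in a context does not change the number of normal fair proofs, by exhibiting a proof-preserving correspondence between proofs of $g$ and proofs of $g'$. The key observation is this: since $t$ is a partial proof of $g$ containing the sub-goal $g'$ somewhere below its root, and $g \sim g'$, the difference between the contexts of $g$ and $g'$ consists only of hypotheses that are \emph{already} declared in $g$'s context (possibly with different multiplicities). So I would first reduce to the local situation: it suffices to prove that if $g = \G \v B$ and $g' = \G, x:A \v B$ where $A$ already occurs in $\G$ (say as $y:A$), then $\di(g) = \di(g')$; the general case then follows by iterating this one-step statement along the chain of hypotheses witnessed by the fact that $g'$ appears inside a partial proof of $g$.

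Next, for this one-step claim I would build two maps. Going from a normal fair proof of $g'$ to one of $g$: replace every free occurrence of the variable $x$ (the one declared with type $A$ in the extra slot) by the variable $y$ already present in $\G$. This is a well-defined substitution on $\l\m^{\f\etrd\ou}$-terms; it preserves typability (both $x$ and $y$ carry type $A$), it preserves normality (substituting a variable for a variable creates no redex), and it preserves fairness (the fairness conditions of Section~2.3 only restrict \emph{which} hypotheses may be re-used in case-splits and contradiction steps, not which variable name witnesses a given formula, so renaming a variable to another variable of the same type cannot violate any of the three conditions). In the other direction, from a normal fair proof of $g$ one gets a proof of $g'$ simply by weakening: the same term is still typable in the larger context $\G, x:A$, and weakening preserves normality and fairness. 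The point is that these two operations are mutually inverse \emph{up to the identification of $x$ and $y$}: once $x$ does not occur, the term lives in the image of the weakening map, and $x \mapsto y$ followed by weakening is the identity on such terms; conversely weakening followed by $x \mapsto y$ is the identity on proofs of $g$. Hence the two finite (or infinite) sets of proofs are in bijection, giving $\di(g) = \di(g')$.

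The main obstacle I anticipate is the bookkeeping around Lemma~\ref{normal} and the fairness conditions, rather than the substitution itself. Concretely, I need to be sure that renaming $x$ to $y$ does not, for instance, merge two previously distinct proofs into one — i.e. that the map on proofs of $g'$ is injective after quotienting appropriately — and that it does not take a fair proof to an unfair one. The subtle case is condition~(a) of fairness (inside a branch of a case-split on $A \vee B$, the hypothesis $A \vee B$ is deleted): I must check that the \emph{set} of available hypotheses that the fairness condition tracks is really determined by the formulas present, not by their names or multiplicities, which is exactly what the relation $\sim$ on contexts is designed to capture. Once that is in hand, the argument is a routine induction on the size of the normal proof using the case analysis of Lemma~\ref{normal}, checking in each of its three cases that the substitution and weakening commute with the proof constructors and respect the sub-goals; I would spell this out only in the $\ou_e$/elimination case, where the contexts of the branch sub-proofs $u_i$ get the extra hypotheses $D_i$ and one must re-apply the induction hypothesis to those enlarged-but-equivalent goals.
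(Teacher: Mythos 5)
Your approach has a fatal flaw: the ``one-step claim'' you reduce to --- that $\di(\G \v B) = \di(\G, x:A \v B)$ whenever $A$ already occurs in $\G$ --- is false in general, and consequently the bijection you describe does not exist. Take $A$ atomic and compare $g = y:A \v A$ with $g' = y:A, x:A \v A$: since contexts are multisets of \emph{indexed} formulae, $x$ and $y$ are two distinct normal fair proofs of $g'$, whereas $y$ is the only variable proof of $g$ (already in minimal logic $\di(g)=1$ while $\di(g')=2$). Your renaming map $x \mapsto y$ merges these two proofs --- exactly the injectivity failure you flag as a worry and then set aside --- and your weakening map is not surjective, since it can never produce a proof of $g'$ that actually uses $x$. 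This is not a bookkeeping issue that more care about fairness would fix: the two sets of proofs genuinely have different cardinalities. The paper itself warns, in the comments after the algorithm of Section~\ref{eq}, that there is no reason to expect $\di(h) = \di(h')$ for arbitrary equivalent goals.

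What your argument never uses is the actual hypothesis of the lemma: that $g'$ occurs \emph{inside a partial proof $t$ of $g$ with $t \neq g$}. That hypothesis is what makes the statement true, and it does so by forcing both counts to be degenerate rather than by matching proofs one-to-one. Either $g$ is unprovable (and then so is $g'$, both counts being $0$), or one can \emph{pump}: pass to a goal $g'' \sim g$ whose context dominates the multiplicities of both $\G$ and $\G'$; then $t$, with its hole retyped as $g''$, is a partial proof of $g''$ containing the hole $g''$, and plugging a proof $u$ of $g$ into it and iterating ($u_0 = u$, $u_{n+1}=t'[g'':=u_n]$) yields infinitely many distinct normal fair proofs, so $\di(g'') = \o$. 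Finally $\di(g) = \o$ (and symmetrically $\di(g') = \o$) because the map collapsing the extra variables of $\G''$ onto variables of $\G$ has finite fibers --- each variable occurs only finitely often in a proof --- so it cannot send an infinite set of proofs of $g''$ onto a finite set of proofs of $g$. The conclusion $\di(g)=\di(g')$ thus holds because both sides are $0$ or both are $\o$, not because of any size-preserving correspondence.
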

\begin{proof}
It is clear that $g$ has no proof iff $g'$ has no proof. Assume
then that  $\di(g) \geq 1$. Let $g''=\G'' \v A \sim g$ be such
that, for any formula $B$, the number of occurrences of $B$ in
$\G$ or in $\G'$ is less or equal to the number of occurrences of
$B$ in $\G''$.

We first show that $\di(g'')= \o$. It is clear that the term $t'$
obtained from $t$ by replacing $g'$ by $g''$ also is a partial
proof of $g''$ and, if $u$ is a proof of $g$, it also is a proof
of $g''$. Then,  the $u_n$  defined by $u_0=u$ and
$u_{n+1}=t'[g'':=u_n]$  are distinct normal and fair proofs of
$g$.

We then show that $\di(g)= \o$ (and, by symmetry, $\di(g')= \o$).
Assume, toward a contradiction, that $\di(g)$ is finite. To each
proof of $g''$ associate the proof of $g$ obtained by replacing
the occurrences of a variable in $\G'' - \G$ by one with the same
type in $\G$. Since $\di(g)$ is finite and $\di(g'')$ is infinite,
there are infinitely many proofs of $g''$ that have the same image
by this transformation. But this is impossible since, in a proof,
each variable occurs only finitely many times.
\end{proof}

\subsection{The equations}\label{eq}

To  every goal $g=\G \v A$ we associate a polynomial system of
equations (denoted as $PSE(g)$) of the form $\Vec{n}=P(\Vec{n})$
where a goal $g_i$ is associated to each variable $n_i$ and $p_i$
is a polynomial that, intuitively,  computes  the number of normal
 and fair proofs of $g_i$ of a given size from the number of
proofs (of smaller size) of the other goals needed to prove $g_i$.

 $PSE(g)$ is defined by the following algorithm.  This algorithm builds, step by step,  a partially ordered
   set $V$ of variables (denoted as $n$ with some index), a function $F$ that associates goals to the variables and a set $E$
 of
 equations of the form $n_i=p_i(\Vec{n})$. We will show (see lemma \ref{fin} below)
 that it terminates. $PSE(g)$ will be the
 set of equations we have built when the algorithm terminates.

It is important to note that the function $F$ is not necessarily
injective i.e. to different variables may correspond to the same
goal. The reason will be given in the comments after the
description of the algorithm.

\medskip

\noindent {\em - Initial step}
\medskip

Set $V=\{n_0\}$, $F(n_0)=g$ and $E=\emptyset$.

\medskip

\noindent {\em - General step}

 If, for all $n_i \in V$, there is
an equation
   $n_i=p_i (\Vec{n})$ in $E$, then stop. Otherwise, choose some $n_i$ for which $E$ has no equation.
 We introduce new variables and build the polynomial $p_i$ as  the sum of  three
 polynomials in the following way. The first one corresponds to a proof of
 $F(n_i) = \G \vdash B$
 beginning by an introduction rule,
 the second corresponds to a proof of $F(n_i)$ by
 contradiction and the last corresponds to a proof of $F(n_i)$ by using some hypothesis and several
 elimination rules.

\medskip

In the definition of these polynomials we will adopt the following
convention. If $h$ is a goal, when we say ``  let $n$ be a
variable for $h$ '' (we will also say ``   $n$ is a name for $h$
'') this will mean that either $F(n_j) \sim h$ for some $n_j <
n_i$ and then $n$ is such an $n_j$ (if there are several choose
  one) or, if no such variable exists, choose a fresh index $j$ and set
  $F(n_j)=h$.
  For each variable $n_j$ introduced in this way,
we set $n_j > n_k$ for each $k$ such that $n_i \geq n_k$.

\begin{enumerate}
  \item The first polynomial $P$ depends on the main connector of $B$.

\begin{enumerate}
\item If $B$ is an atomic formula, then $P=0$
  \item If $B=C \f D$ then let
$h= \G, y:C \v D$, then let $P=n_j$ where $n_j$ is a variable for
$h$.

  \item If $B=B_1 \etrd B_2$. Let $h_i$ be the goal $\G \v B_i$.
  Then $P=n_{i_1}. n_{i_2}$ where $n_{i_1}, n_{i_2}$ are variables for $h_1,h_2$.
  \item If $B=B_1 \ou B_2$. Let $h_i$ be the goal $\G \v B_i$.
  Then $P=n_{i_1} + n_{i_2}$ where $n_{i_1}, n_{i_2}$ are variables for $h_1,h_2$.

\end{enumerate}

  \item The second polynomial $Q$ is as follows.

\begin{enumerate}
  \item If $B = \bot$ or $B=\neg C$ or if there is already in $\G$ an hypothesis of the form $\a : \neg B$,
  then $Q=0$.
  \item Otherwise, let $h=\G, \a : \neg B
  \v \bot$ and $Q=n_j$ where $n_j$ is a variable for
$h$.

\end{enumerate}

\item The last polynomial is the sum of (over all the hypothesis
$H$ in $\G$) of the polynomials $R_H$ defined as follows.
\begin{enumerate}
  \item If $H$ is $x : A$, $R_H$ is the sum (over $L \in Elim(A, B)$)
of the polynomials $R_{H,L}$ defined below.

- Assume $L=[A_1 ::  ... :: A_p :: *]$. Then $R_{H,L}= n_{i_1}.\
... \ .n_{i_p}  $ where $g_i= \G \v A_i$ and $n_{i_1}, ...,
n_{i_p}$ are variables for $g_1, ..., g_p$. In particular, if
$p=0$, this means $R_{H,L}=1$.

- Assume $L=[A_1 ::  ... :: A_p :: D_1 \ou D_2]$. Then, let $g_i=
\G \v A_i$,  $h_i= \G', y:D_i \v B$ where $\G'$ is $\G$ from which
we have deleted the hypothesis $x : A$. Let $n_{i_1},  ... ,
n_{i_p}$ be variables for $g_1, ..., g_p$, let $n_{j_1}, n_{j_2}$
be variables for $h_1,h_2$. Then $R_{H,L}= n_{i_1}.\ ... \
.n_{i_p}. n_{j_1} . n_{j_2}$
  \item If $H$ is $\a : \neg A$ then $R_H=n_j$ where $h=\G \v A$ and $n_j$ is a variable for $h$.

\end{enumerate}

\end{enumerate}

  \medskip

\noindent   {\bf Comments}

\begin{enumerate}
  \item

 Eliminating the hypothesis $x : A$ in the case
   of an elimination of the disjunction is condition (a) of
   fairness.
The fact that $Q=0$ in the first case of a proof by
   contradiction is condition (b) and (c) of fairness.

  \item The fact that a goal  may have different names i.e. we may have
   $F(n_i)=F(n_j)$ for $i \neq j$ comes from the following reason.
   A goal $h$ may appear in different  proofs of $g$ or in  different parts of a proof of $g$. Of course
  $\di{(h)}$ does not depend on the place where
   $h$ appears but the condition that lets us decide to give it a
   new name or not depends of this place. We know, by lemma
   \ref{27}, that $\di(h)=\di(h')$ if $h \sim h'$ and $h$ is below $h'$
  in some part of a proof but there is no reason to have
  $\di(h)=\di(h')$ if they appear in different proofs or in independent part of
  a
  proof.
\end{enumerate}

\begin{lem}\label{fin}
The algorithm given above terminates.
\end{lem}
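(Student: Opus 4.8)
The plan is to show termination of the algorithm by a König's lemma argument on the tree of variables generated by the recursive calls, using the sub-formula property (Theorem~\ref{sub}) to bound the formulae that can appear and the equivalence relation $\sim$ together with the ordering on variables to bound the multiplicity with which a hypothesis can accumulate in a context.

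\medskip

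\noindent\textbf{Proof (sketch).} First I would set up the underlying tree. The algorithm, starting from $n_0$ with $F(n_0)=g=\G\v A$, introduces variables for sub-goals; declare $n_j$ a child of $n_i$ whenever $n_j$ is a fresh name introduced while building the polynomial $p_i$ for $n_i$. This gives a tree $T$ rooted at $n_0$. The tree is finitely branching: for each $n_i$ with $F(n_i)=\G\v B$, the polynomial $p_i$ is a finite sum (one summand for the introduction rule, one for the proof by contradiction, and one $R_{H,L}$ for each hypothesis $H$ in the finite context $\G$ and each $L$ in the finite set $Elim(A,B)$), and each summand mentions only finitely many variables, so $n_i$ has finitely many children. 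Hence, by König's lemma, if $T$ were infinite it would contain an infinite branch $n_0, n_{k_1}, n_{k_2}, \dots$, and it suffices to derive a contradiction from that.

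\medskip

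\noindent Next I would control the formulae along such a branch. By inspection of the algorithm every goal $F(n_{k_m})=\G_m\v B_m$ that arises is obtained from its parent goal $\G_{m-1}\v B_{m-1}$ by (i) taking $B_m$ to be a subformula of $B_{m-1}$ or of a formula in $\G_{m-1}$, or $B_m=\bot$, or $B_m=B_{m-1}$ in the disjunction-elimination case, together with (ii) possibly adding to the context one new hypothesis which is again a subformula of a formula already present (the antecedent $C$ when $B=C\f D$, a disjunct $D_i$, or a negation $\neg B$ coming from the $\bot_e$ case). By Theorem~\ref{sub} (item~2), there is a \emph{finite} set $S$ of formulae — the subformulae of the formulae in $\{A\}\cup\G$ and their negations — such that every $B_m$ and every hypothesis appearing in any $\G_m$ along the branch lies in $S\cup\{\bot\}$. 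In particular there are only finitely many possible \emph{conclusions} $B_m$ and only finitely many possible \emph{sets of hypothesis types} (subsets of the finite set $S$), hence only finitely many equivalence classes of goals under $\sim$.

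\medskip

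\noindent The crucial point — and the main obstacle — is to rule out an infinite branch all of whose goals are $\sim$-equivalent; here is where the naming convention and the order on variables do the work. When a fresh name $n_j$ is created for a goal $h$ along the construction of $p_i$, the convention sets $n_j>n_k$ for every $k$ with $n_i\ge n_k$; so the order strictly increases from parent to child, and a variable is created fresh for $h$ only when \emph{no} $n_\ell<n_i$ already has $F(n_\ell)\sim h$. Consequently, along any branch $n_0,n_{k_1},n_{k_2},\dots$, no two distinct indices $k_m<k_{m'}$ can have $F(n_{k_m})\sim F(n_{k_{m'}})$: if they did, then at the moment $n_{k_{m'}}$ was about to be introduced, the already-existing smaller variable $n_{k_m}$ would have been reused instead of a fresh one, so $n_{k_{m'}}$ would not be a (fresh) child on the branch. (One must check that $n_{k_m}<n_{k_{m'}}$ as required by the convention, which follows because the order strictly increases along the branch.) Thus the goals $F(n_{k_m})$ along the branch lie in pairwise distinct $\sim$-classes, contradicting the finiteness of the number of $\sim$-classes established in the previous paragraph. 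Hence $T$ is finite, so $E$ eventually contains an equation for every variable and the algorithm halts. \boksik
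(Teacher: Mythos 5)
Your proof is correct and follows essentially the same route as the paper's: finitely many $\sim$-classes of goals (since all formulae involved are sub-formulae of those in $g$ or their negations), no $\sim$-repetition along a branch because of the naming convention and the order on variables, finite branching, and K\"onig's lemma. You merely spell out the details that the paper's terse argument leaves implicit (and your appeal to Theorem~\ref{sub} is not really needed --- direct inspection of the algorithm already bounds the formulae, which is all you use).
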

\begin{proof}
Since the goals are made of sub-formulae of the formulae in $g$,
there are only finitely many possible non equivalent goals. Also
note that, when we try to find a proof for a goal $h$ and we have
to consider some goal $h_1$, we  give a new name to $h_1$ (i.e. we
introduce a new variable $n_i$ such that $F(n_i)=h_1$ for which,
later, we will have to find an equation) only when there is no
$h_2 \sim h_1$ below $h$ in the branch of the proof of $g$ that
the algorithm, intuitively, constructs. Thus, all the branches are
finite. Since there are only finitely many rules, by Konig's
lemma, only finitely many variables can be introduced and thus the
algorithm terminates.
\end{proof}

\subsection{Proof of  theorem \ref{thm}}

It  is an immediate consequence of lemma \ref{27} and theorem
\ref{main} below.

\begin{thm}\label{main}
Let $g$ be a goal and let $\Vec{a}$ be the minimal solution of
$PSE(g)$. Then, for each variable $n_i$ occurring in $PSE(g)$ we
have $a_i=\di(F(a_i))$.
\end{thm}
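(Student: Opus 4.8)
The plan is to prove the two inequalities $a_i \le \di(F(n_i))$ and $\di(F(n_i)) \le a_i$ for every variable $n_i$ of $PSE(g)$ separately. For both directions the natural device is to stratify proofs by size: for a goal $h$ and $k \in \N$, let $\di_k(h)$ be the number of normal and fair proofs of $h$ of size at most $k$, so that $\di(h) = \bigsqcup_k \di_k(h)$ in $\NA$. By Proposition~\ref{tarski} we also have $\Vec a = \bigsqcup_k P^k(\Vec 0)$, so it suffices to relate the $k$-th iterate $P^k(\Vec 0)$ to the truncated counts $\di_k$. The key structural input is Lemma~\ref{normal}: it enumerates exactly the ways a normal proof of $\G \v B$ can be built, and each clause corresponds precisely to one summand of the polynomial $p_i$ produced by the algorithm of Section~\ref{eq} (introduction rule $\leftrightarrow$ $P$; proof by contradiction $\leftrightarrow$ $Q$; head-variable elimination via $Elim(A,B)$ $\leftrightarrow$ the $R_H$). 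Fairness is exactly what makes the $Elim$-branch delete the hypothesis $x:A$ and what kills the $Q=0$ cases, so the polynomial really does count fair proofs and nothing more.

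First I would prove, by induction on $k$, that $(P^k(\Vec 0))_i \le \di_k(F(n_i))$ for all $i$. The base case $k=0$ is trivial. For the induction step one unwinds the definition of $p_i$: a proof of $F(n_i)$ of size $\le k+1$ decomposes, via Lemma~\ref{normal}, into proofs of the sub-goals $h$ of size $\le k$, and by the induction hypothesis the number of such sub-proofs is at least $(P^k(\Vec 0))_{j}$ where $n_j$ is \emph{a} name for $h$; summing and multiplying as the polynomial prescribes gives the bound. The only subtlety is the ``naming'' step: when the algorithm reuses an existing $n_j < n_i$ with $F(n_j) \sim h$ rather than creating a fresh variable, the sub-goal $h$ appearing in the actual proof need not be literally $F(n_j)$, only equivalent to it. Here one invokes Lemma~\ref{27}: since $F(n_j)$ occurs, as a goal, inside the partial term the algorithm is building to prove $F(n_j)$'s ancestor --- this is precisely the book-keeping enforced by the order $n_j < n_i$ --- we get $\di(F(n_j)) = \di(h)$, and with a little care the same reasoning applies at each truncation level $k$, or at worst one passes to $\di$ directly since once a proof of a given size exists the relevant counts are already as large as the corresponding $a_j$. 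Taking $\bigsqcup_k$ yields $a_i \le \di(F(n_i))$.

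For the reverse inequality $\di(F(n_i)) \le a_i$, I would show by induction on $k$ that $\di_k(F(n_i)) \le (P^k(\Vec 0))_i$, again using Lemma~\ref{normal} to decompose an arbitrary normal fair proof of size $\le k+1$ and the induction hypothesis to bound the number of ways to fill each sub-goal. The matching of summands is the same as before; the orientation of Lemma~\ref{27} needed is that \emph{every} normal fair proof of $h$ is also counted by the variable the algorithm chose for $h$, which again follows because $\di$ is invariant under $\sim$ along a branch. Taking $\bigsqcup_k$ and using $\bigsqcup_k P^k(\Vec 0) = min(P) = \Vec a$ gives $\di(F(n_i)) \le a_i$, and combining the two inequalities finishes the proof.

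\textbf{Main obstacle.}
The delicate point --- and the reason the elaborate naming machinery and the order on $V$ were set up --- is the handling of reused variables. The whole argument hinges on the assertion that whenever the algorithm identifies two goals $h \sim h'$ it is legitimate because one lies below the other in ``the same part'' of a proof, so Lemma~\ref{27} applies and $\di(h) = \di(h')$; making this precise at the level of the size-truncated counts $\di_k$ (which do \emph{not} obey the clean $\sim$-invariance that $\di$ does, since a smaller context may admit shorter proofs) is where the bookkeeping really has to be done carefully. I expect the cleanest route is to prove the $\le$ direction entirely with $\di_k$ and $P^k$, but to prove the $\ge$ direction by first establishing it with the honest $\di$ using Lemma~\ref{27}, then invoking Proposition~\ref{tarski} to pass back to the iterates --- thereby isolating all use of $\sim$-invariance into one clean appeal to Lemma~\ref{27}.
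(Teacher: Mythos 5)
Your two-inequality skeleton and your inventory of ingredients (Lemma \ref{normal} for the decomposition, Lemma \ref{27} for reused variables, Proposition \ref{tarski} for $\Vec{a}=\bigsqcup_k P^k(\Vec{0})$, stratification of proofs by size) are all correct, but you have wired the tools to the wrong inequalities, and in the form you finally recommend one of the two directions does not go through. The paper proves $a_i\le\di(F(n_i))$ \emph{without} any truncation: it sets $b_i=\di(F(n_i))$, uses Lemma \ref{27} applied to the honest counts $\di$ (where $\sim$-invariance is actually available) to check that $\Vec{b}$ is a solution of $\Vec{n}=P(\Vec{n})$, and concludes $\Vec{a}\le\Vec{b}$ by minimality. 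This sidesteps entirely the difficulty you correctly identify, namely that $\di_k$ is not invariant under $\sim$; your attempt to run this direction as an induction $(P^k(\Vec{0}))_i\le\di_k(F(n_i))$ founders exactly at the reused-variable step, and ``at worst one passes to $\di$ directly'' is not a repair --- it is the step that needs an argument.

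Conversely, your fallback plan for $\di(F(n_i))\le a_i$ --- establish it with the honest $\di$ using Lemma \ref{27} and then invoke Proposition \ref{tarski} --- is logically backwards. Knowing that $\Vec{b}$ is a fixed point of $P$ and that $\Vec{a}$ is the \emph{minimal} one can only ever yield $\Vec{a}\le\Vec{b}$; no appeal to Knaster--Tarski converts ``$\Vec{b}$ is a solution'' into an upper bound on $\Vec{b}$ (the one-equation system $x=x$ has every value as a solution). This direction is precisely where the size-stratified comparison with the iterates is indispensable, and it is what the paper does: writing $d_i(k)$ for the number of normal fair proofs of $F(n_i)$ of size at most $k$ and $u_k=P^k(\Vec{0})$, one shows $\overrightarrow{d(k+1)}\le P(\overrightarrow{d(k)})$ (every proof of size $k+1$ decomposes, via Lemma \ref{normal} and fairness, into strictly smaller subproofs counted by some summand of the polynomial), whence $\overrightarrow{d(k)}\le u_k$ by induction on $k$ and $\Vec{b}\le\Vec{a}$ in the limit. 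So keep the stratified induction, but use it only for $\di\le a$, and prove $a\le\di$ by the fixed-point-plus-minimality argument: that is where Lemma \ref{27} earns its keep, and it needs to be applied only to the untruncated counts.
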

\begin{proof}
 Let $PSE(g)$ be the set $\Vec{n} =P(\Vec{n})$ of equations and $\Vec{b}$ be defined by $b_i=\di(F(n_i))$.
  It follows from lemma \ref{27}
 that $\Vec{b}$ is a solution of $PSE(g)$.
Thus, we have $\Vec{a} \leq \Vec{b}$. Let $u_k=P^k(\Vec{0})$.
Since $\Vec{a}$ is the minimal solution of the system $\Vec{n}
=P(\Vec{n})$ we have $\Vec{a}=\bigsqcup_{k=0}^{\infty} u_k$.
Denote by $d_i(k)$ the number of normal and fair proofs of
$F(n_i)$ of size $k$ and $\overrightarrow{d(k)}$ the vector whose
components are the $d_i(k)$. Then
$\Vec{b}=\bigsqcup_{k=0}^{\infty} \overrightarrow{d(k)}$. Note
that the equations are done so that $\overrightarrow{d(k+1)}\leq
P(\overrightarrow{d(k)})$. An immediate induction shows that, for
each $k$, $\overrightarrow{d(k)} \leq u_k$. It follows then that
$\Vec{b} \leq \Vec{a}$.
\end{proof}

\noindent {\bf Remark}

If, instead of interpreting the variables and coefficients in
$\NA$,  we interpret them in the set $\{0,1\}$ where the
operations and the order are the ones of $\N$ except that $1+1=1$,
the conclusion of the theorem is then that $a_h=1$ iff the goal
$h$ is provable.

\subsection{Some other restrictions on proofs}\label{rest}

\begin{defn}
We say that a normal term $t$ is in $\eta$-long normal form if the
following holds for every sub-term $u$ of $t$.
\begin{enumerate}
  \item If $u$ has type $A \f B$ then either $u=\l x. u'$ or $u$ is
  applied to some other term.
  \item If $u$ has type $A \etrd B$ then $u=\< u_1,u_2 \>$ for some
  terms $u_1,u_2$.
\end{enumerate}
\end{defn}

\medskip

The algorithm we have given in the previous sections has been
designed to get the number of normal and fair proofs in classical
logic. It can be easily transformed if we want to only count
proofs satisfying some constraints.
\begin{enumerate}
  \item If we want to have proofs in minimal logic  i.e.
  the logic where the rules $\bot_i$ and $ \bot_e$ are deleted, we just forget
  the second step (which corresponds to proof by contradiction) in
  the definition of the set of equations
  \item If we want to have proofs in intuitionistic logic, i.e.
  the logic where the rules $\bot_i,$ and $ \bot_e$ are deleted and
  replaced by the rule
    $$\F{\G \v  \bot} {\G \v A }$$
 we replace the polynomial given in the second step of
  the definition of the set of equations by the following one. If $g$
  is $\G \v B$ and $B \neq \bot$ then $Q=n_h$ where $h$ is $\G \v
  \bot$ and $Q=0$ otherwise.
  \item Instead of changing the logic, we may also want to
  restrict the form of the proofs we are looking for. The main
  usual restriction is to ask to have proofs in $\eta$-long normal
form. It is well known that, with this restriction, the system
remains complete. If we want such proofs it is enough, in the
definition of the equations to ask that, if the goal is $\G \v B$
and the main connector of $B$ is either an arrow or a conjunction,
then we cannot use a proof by contradiction or use an elimination
rule.
\item Our algorithm gives two normal and fair proofs for the formula $A \f A$.
These proofs are $\l x. x$ and $\l x. \m\a. (\a \ x)$. We could
consider that these two proofs are the same and, actually, there
is a reduction rule in the $\l\m$-calculus that ensures that the
second term reduces to the first one. This rule, that looks like
the $\eta$-rule of the $\l$-calculus, is the following $\m\a. (\a
\ M) \trrd M$ if $\a$ does not occur in $M$. It intuitively means
that if, in a  proof of $A$  by contradiction, in fact you have a
proof $M$  of $A$  that does not use $\neg A$, you can eliminate
the use of the rule for  proof by contradiction.

It would be more difficult to consider this rule in the definition
of normal proof. This is because it is non local and our
algorithm, by essence, can only consider local configurations.
\end{enumerate}

\subsection{From polynomials to formulae}

 In the previous sections we have associated to each formula $F$  a set of polynomial
  equations whose minimal solution gives the number of normal and fair proofs
  of $F$.
 The opposite construction is also possible as the next proposition
 shows.

\begin{defn}
Let $F$ be a formula of implicational propositional logic i.e. $F$
is built from atomic formulae by using only the arrow as
connectors. The rank of $F$ (denoted as $r(F)$) is defined by the
following rules.
\begin{enumerate}
  \item If $F$ is atomic, then $r(F)=0$
  \item If $F=A \f B$, then $r(F)=max(r(A)+1, r(B))$

\end{enumerate}

\end{defn}

 \bt{prop}
 Let $E$ be a polynomial system of
equations with $n$ variables. We can compute  $n$ formulae $A_1,
..., A_n$  of implicational logic  such that, if $(a_1, ..., a_n)$
is  the minimal solution  of $E$ then, for all $i \leq n$, $a_i$
is the number of proofs of $A_i$ in $\eta$-long normal form.
Moreover we may assume that $r(A_i) \leq 2$ for all $i \leq n$.
\et{prop}

\begin{proof}
 Let $\Vec{x}=F(\Vec{x})$ be the system and $F=(f_1 ,...,f_n )$.
 We take $n$ fresh ground types $O_1, \ldots ,O_n$.
 For each polynomial $f_p$ we construct a formula $B_p$ in the following
 way.
 For each monomial $M_i = x_1^{\alpha_1} \cdot \ldots \cdot x_n^{\alpha_n}$
 which  appears  in $f_p$ let
 $T_i$ be the formula  $O_1^{\alpha_1} ,\ldots , O_n^{\alpha_n}  \ar O_p$.
 Remember
 that constant $1$ can be obtained as the monomial
 $x_1^{\alpha_1} \cdot \ldots \cdot x_n^{\alpha_n}$ when all $\alpha_i =0$.
 The formula associated to $f_p$ is  $T_1 ,\ldots, T_{m} \ar O_p$.
 The fact that these formulae satisfy the desired conclusion is straightforward.
\end{proof}

\section{Examples}

\noindent {\bf Example 1}

We want to compute the number of normal and fair proofs of the
formula $F$ below

 $$ F= F_1 \ar F_2 \ar F_3 \ar F_4 \ar F_5 \ar F_6 \ar A$$

\noindent where

$F_1= B \ar C \ar C$ \hspace{1cm} $F_2=F_3=C$
 \hspace{1cm} $F_4=B \ar C \ar B$

 $F_5=C \ar
C \ar A$ \hspace{1cm} $F_6=A \ar B \ar A$

\medskip

To avoid too many equations we will restrict ourselves to proofs
in $\eta$-long normal form and in minimal logic and, to simplify
notations, we will use the same name for a goal and the variable
attached to it and, if a goal has several names, the corresponding
variables will be the same with, possibly, some index. Also note
that, since we will not write the terms representing the proofs,
there is no need to give names to the hypothesis and thus we will
write contexts simply as multi-sets of formulae.

\noindent Let $\G= F_1, F_2, F_3,  F_4,  F_5,  F_6$.
 The goals are:

\begin{center}
 $x $ is $ \G \v A$,

 $y,y_1 $ are $\G \v B$

 $z,z_1 $ are $ \G \v C$.

\end{center}

 \noindent  The order on
these variables is given by: $x<y,z$ ;  $y<z_1$ and $z<y_1$.

\noindent The set of equations is
\medskip

\begin{center}

\noindent $x=xy+z^2$\\
$y=yz_1$ \hspace{2cm} $z_1=2+yz_1$ \\
$z=2+y_1z$ \hspace{2cm} $y_1=y_1z$

\end{center}

\medskip

\noindent The minimal solution is $x=4, y=y_1=0, z=z_1=2$ and,
therefore, there are exactly 4 proofs of $F$ in
 $\eta$-long normal form.

 \medskip

\noindent {\bf Example 2}

 We want to compute the number of normal and fair proofs of the formula $F$
 below
 where $\neg_c B$ is the abbreviation of $B \ar C$.
 This formula is a kind of translation (provable in minimal logic)
 of Pierce law.

\medskip

 $$ F= ((A \ar \neg_c\neg_c B) \ar  \neg_c \neg_c A) \ar \neg_c \neg_c
 A$$

\medskip

Again, we adopt the same restrictions and conventions of notations
as in the previous example.

 \noindent Let $F_1= (A \ar \neg_c\neg_c B) \ar \neg_c \neg_c A$, $F_2=
 \neg_c A$ and $\G=\a_1 : F_1, \a_2 : F_2$.

 \noindent The goals are

 \begin{center}

 $x$ is
 $\G \v C$,

 $y, y_1$ are
 $\G,  A,  \neg_c B \v C$,

 $z, z_1$ are  $\G,  A \v
 C$,

 $u$ is $\G \v A$,

 $v, v_1$ are
 $\G,  A,  \neg_c B \v A$,

  $w, w_1$  are
 $\G,  A, \neg_c B \v B$

$r,r_1$ are $\G,  A \v
 A$.

 \end{center}

\noindent The order on these variables is given by: $x<y,z,u$ ;
$y<z_1,v,w$ ;  $z_1<r_1$ ; $z<y_1, r$ ; $y_1 <v_1, w_1$

\noindent  The set of  equations is

\medskip

\begin{center}

$x=yz+u$

  $y=yz_1+v+w $ \hspace{1cm} $z_1=yz_1+r_1$ \hspace{1cm}

$z=y_1z+r$ \hspace{1cm} $y_1=y_1z+v_1+w_1 $

  $v=v_1=1$ \hspace{1cm}
  $w=w_1=0$ \hspace{1cm}  $r=r_1=1$ \hspace{1cm} $u=0$

\end{center}

\medskip

\noindent The minimal solution is $x=y=z=y_1=z_1=\omega$ and,
therefore, there are infinitely many proofs of $F$ in $\eta$-long
normal forms.

 \medskip

\noindent {\bf Example 3}

Let $F$ be the formula $\neg A \ou A$. It is known that $F$ is not
provable in intuitionistic logic.  We will show that, in classical
logic, the are infinitely many distinct proofs in $\eta$-long
normal form. Since the number of equations to be written is quite
big we will only write some of those that imply that the number is
infinite. To simplify we will also omit some intermediate goals
and/or equations when the relations between the corresponding
variables are easy to show.

The useful goals are the following

\begin{center}
$x$ is $\v F$

$x_1$ is $\v A$, $x_2$ is $\v \neg A$ and $x_3$ is $\a : \neg F \v
\bot$

$a$ is $\a : \neg F \v A$ and $b$ is $\a : \neg F \v \neg A$

$a_1$ is $\a : \neg F, \b : \neg A \v \bot$ and $a_2$ is $\a :
\neg F \v \neg A$

$c$ is $\a : \neg F, \b : \neg A, y:A  \v \bot$

$c_1$ is $\a : \neg F, \b : \neg A, y:A  \v A$

$d$ is $\a : \neg F, \b : \neg A, y:A , z:A \v \bot$

\end{center}

Some equations are

\begin{center}
$x=x_1+x_2+x_3$

 $x_1=0$, $x_2=0$

 $x_3=a+b$

 $a=a_1+a_2$

 $a_1=c$ \ \ \ ($\star$)

 $c=2.c_1+d$

 $c_1=1$

 The use of lemma \ref{27} gives $d=c$.
\end{center}

\noindent ($\star$) $a_1$ actually is the sum of $c$ and some
 other variables that are easily shown to be 0.

\comment{
\section{Counting closed terms in \LA calculus with a single ground type}

 The {\it number of normal forms } problem in the simplest
 case when the language of typed \LA calculus is built over the type
 system based on single type can be reduced to the well known Statman
 hierarchy (see \cite{S80}).
 Suppose the only ground type is $O$.
 Type $\tau$ is reducible to $\mu$ written as $\tau \succ \mu$ if
 there is a closed term M of type $\tau \ar \mu$ such that for all
 $N_1 , N_2$ of type $\tau$, $N_1 \be N_2$ if and only if $MN_1 \be MN_2$.
 Types $\tau \simeq \mu$ are equivalent
 if $\tau \succ \mu$ and $\mu \succ \tau$. The relation $\simeq$ is an
 equivalence
 relation.
 Equivalence classes are ordered by $[ \tau ] > [ \mu ]$ if
 $\tau \succ \mu$.  Statman in \cite{S80} proved that equivalence
 classes are well ordered
 in ordinal $\omega +3$ (see also \cite{D88} ).
 It is easy to observe that equivalent types
 possess  the same number of closed terms in  normal forms. Therefore the
 following test
 is sufficient to decide the number of closed terms.
 First decide if the given type $\tau$ is an equivalent with respect to $\simeq$
 with type $O$.
 This is decidable question equivalent with
 unprovability of $\tau$.
% same as to ask
% whether or not the  formula is unprovable.
 If the answer is YES then  the type has no closed terms.
 Next, decide if the given type is equal to $O^n \ar O$. If so then
 the type has exactly $n$ closed terms. Otherwise type has infinite number
 of closed terms. This is based on the fact that all of types $\omega ,
 \omega+1, \omega +2$ and $\omega +3$ in the Statman hierarchy
 are having infinite number of closed terms.
}


\begin{thebibliography}{9}

\comment{
\bibitem{A92} A.~Arnold, D.~Niwinski :  {\it Fixed point characterization
              of weak monadic logic definable sets of trees}.
              Tree Automata and Languages, pp 159 -188  1992.
}

\bibitem{BeY79} C.B Ben-Yelles. {\it Type assignment in the lambda
        calculus. Syntax and semantics}. Thesis, Mathematics
        Department, University of Wales Swansea, Swansea, UK (1979).

\bibitem{D88} W.~Dekkers. {\it Reducibility of types in Typed Lambda
Calculus.}
                Information and Computation vol 77, No 2  pp 131--
                137 (1988).


\bibitem{DGro01} P. de Groote. {\it Strong Normalization of Classical
         Natural Deduction with Disjunction}.
        Springer Lecture Notes in Computer Science 2044 pp 182-196
         (2001).


\bibitem{Y79} J.R. Hindley. {\it  Basic Simple Type Theory}.
        Cambridge Tracts in Theoretical Computer Science 42.
          Cambridge University Press 1997.

\bibitem{H91} S. Hirokawa.  {\it  Infiniteness of Proof($\alpha$) is P-Space
Complete}.
     Theoret. Comput. Sci. 206 no. 1-2, pp 331--339 (1998).

\comment{
\bibitem{L84} J.W.~Lloyd, Foundations of Logic Programming, Springer-
               Verlag, 1984
}

\bibitem{Par92} M. Parigot. {\it $\lambda\mu$-Calculus: An Algorithmic Interpretation
          of Classical Natural Deduction}.
          Springer  Lecture Notes in Computer Science 624 pp 190-201 (1992).

\comment{
\bibitem{S80} R.~Statman, On the existence of closed terms in
          the typed $\lambda$-calculus. In: R.~Hindley and J.~Seldin,
          eds.  Combinatory Logic, Lambda Calculus and Formalism (Academic
          Press, New York, 1980).
}
%\bibitem{S81} R.~Statman, On the Existence of closed terms in
%          typed $\lambda$ calculus II: Transformations of unification
%          problems {\it  Theoretical Computer Science 15 (1981) 329--338}

\bibitem{Tro} A.S. Troelstra, H. Schwichtenberg. {\it
Basic proof theory}.  Cambridge University Press 1996.

\bibitem{Vand} D. Van Dalen. {\it Logic and
structure}. Springer 1997.

\comment{
\bibitem{W93} D.A.~Wolfram,  The Clausual Theory of Types, Cambridge Tracts in
          Theoretical Computer Science 21, Cambridge University Press 1993.
}

\bibitem{WY2005} J. B. Wells, B. Yakobowski. {\it Graph­Based Proof Counting
      and Enumeration with Applications for Program Fragment
      Synthesis}. Springer  Lecture Notes in Computer Science  3573, pp 262-277 (2005).

\comment{
\bibitem{zai95}  M. Zaionc "Fixpoint Technique for Counting Terms in
        Typed Lambda Calculus", Technical Report 95-20 of Department
        of Computer Science at State University of New York at Buffalo,
        April 1995. http://www.cs.buffalo.edu/tech-reports/95-20.ps
}


%\bibitem{Z88} M.~Zaionc, Mechanical Procedure for Proof
% Construction via Closed Terms in Typed $\lambda$ calculus {\it
%            Journal of Automated Reasoning 4 (1988) 173--190}


\end{thebibliography}
\end{document}